\newtheorem{theorem}{Theorem}
\newtheorem{conjecture}{Conjecture}
\newtheorem{lemma}{Lemma}
\begin{document}

\title{Reciprocals of false theta functions}

\author{William J. Keith}

\address{Department of Mathematical Sciences\newline
Michigan Technological University\newline
Houghton, Michigan 49931, U.S.A.\newline
email: wjkeith@mtu.edu\newline
ORCID: 0000-0002-5109-0216}

\begin{abstract} We investigate reciprocals of false theta functions, producing results such as congruences, simple asymptotic bounds, and combinatorial identities.  Of particular interest is a connection between $1/\Psi(-q^2,q)$ and the truncated pentagonal number theorem of Andrews and Merca.  We record a useful dissection identity analogous to the known theta function dissection.
\end{abstract}

\maketitle

\emph{Dedicated to George Andrews and Bruce Berndt for their $85^{th}$ birthdays}

\section{Introduction}

Ramanujan's \emph{general theta function} is the bilateral sum $$f(a,b) := \sum_{n=-\infty}^\infty a^{\binom{n+1}{2}} b^{\binom{n}{2}}.$$ It is symmetric in its arguments and famously satisfies the Jacobi Triple Product identity $$f(a,b) = (-a; ab)_\infty (-b; ab)_\infty (ab;ab)_\infty $$ where we are using the Pochhammer symbol $$(a;b)_n = \prod_{k=0}^{n-1} (1-a b^k) \quad , \quad (a;b)_\infty = \lim_{n \rightarrow \infty} (a;b)_n.$$  An important example is $$f(-q,-q^2) = (q;q)_\infty = 1 - q - q^2 + q^5 + q^7 - q^{12} - q^{15} + \dots ,$$ \noindent for which the reciprocal is the generating function for the number of partitions of an integer $n$, $$\frac{1}{(q;q)_\infty} = \sum_{n=0}^\infty p(n) q^n.$$

Ramanujan defined and studied several variations of theta functions.  In his last letter to Hardy \cite{LastLetter} he introduced the \emph{mock theta functions}, saying of them, \emph{I discovered very interesting functions recently which I call ``Mock'' $\theta$-functions.  Unlike the ``False'' $\theta$-functions (partially studied by Rogers), they enter into mathematics as beautifully as the ordinary theta functions....}

The \emph{false theta functions} to which Ramanujan refers are a sign perturbation of theta functions, which are defined thus: $$\Psi(a,b) := \sum_{n=0}^\infty a^{\binom{n+1}{2}} b^{\binom{n}{2}} - \sum_{n=-\infty}^{-1} a^{\binom{n+1}{2}} b^{\binom{n}{2}} .$$

Considered as functions of complex variables, all of these objects labeled as some variation of ``theta function'' generally have in common some well-behaved convergence criterion on or near the unit circle, the precise details of which are not required for this paper.  In Zwegers' Ph.D. thesis \cite{Zwegers} he showed that the mock theta functions are the holomorphic parts of weight $1/2$ harmonic Maass forms.  They have extraordinary symmetries giving rise to much work and many beautiful results.  For a recent survey see Gordon and McIntosh \cite{GorMc}; a shorter and more accessible introduction for a more general audience was written by Duke \cite{Duke}.

False theta functions, as one might expect from the sign change, are less symmetric.  Bringmann and Milas \cite{BringMilas}, and Goswami and Osburn \cite{GoswamiOsburn}, showed that these are examples of the recently named class of \emph{quantum modular forms}, which have looser convergence criteria.  Following Ramanujan's judgment, one might therefore have less hope of beautiful theorems associated to them.  In particular, congruences of the form $a_f(An+B) \equiv 0 \pmod{m}$ for the coefficients of a power series $f(q) = \sum_{n=n_0}^\infty a_f(n)q^n$ often arise in partition theory essentially as a consequence of the transformation properties satisfied by modular forms.

Still, much like the Rogers-Ramanujan identities, a function that drew the attention of two such great mathematicians should have plenty to recommend investigation.  Both Rogers and Ramanujan produced several interesting results concerning false theta functions.  Ramanujan's Lost Notebook features the example \begin{equation}\label{RLNPsi}\Psi(q^3,q) = \sum_{n=0}^\infty \frac{(q;q^2)_n q^n}{(-q;q^2)_{n+1}}.\end{equation}

The goal of this paper is to consider some specific cases of false theta functions, in particular their reciprocals.  Our focus will be on $$\frac{1}{\Psi(-q^t,q)} =: \sum_{n=0}^\infty c_t(n) q^n.$$

We will establish several results which appear to be fairly untrodden ground in the literature: several congruences, an exponential asymptotic, and some side results, including an interesting connection to the \emph{truncated pentagonal number theorem} recently studied by Andrews and Merca.  All of our techniques are relatively elementary, leaving many potential avenues of investigation for interested readers.  The final section suggests a number of questions.

\section{Background and notations}

We collect here a few notations and common identities that will be useful in the paper.

When we say of power series $f(q) := \sum_{n=n_1} a(n) q^n$ and $g(q) := \sum_{n=n_2} b(n) q^n$ that $f(q) \equiv_m g(q)$, we mean that $a(n) \equiv b(n) \pmod{m}$ for all $n \in \mathbb{Z}$.

By Kummer's theorem for binomial coefficients, it holds for primes $p$ and integers that $$\left( \sum_{n=n_0}^\infty f(n) q^n \right)^{p^k} \equiv_{p^k}  \left( \sum_{n=n_0}^\infty f(n) q^{pn} \right)^{p^{k-1}}.$$  The most common use we will make of this fact will be for squaring modulo 2, in that $\left( \sum_{n=0}^\infty f(n) q^n \right)^2 \equiv_2  \sum_{n=0}^\infty f(n) q^{2n}$.  We will frequently use this fact, and the fact that $f(q) \equiv_2 -f(q)$ or indeed any collection of individual sign changes on the coefficients, without repeated comment.

We use throughout the standard notations for the $q$-Pochhammer symbol, $$(a;b)_n = (1-a) (1-ab) \dots (1-ab^{n-1}), \quad \quad (a;b)_\infty = \lim_{n \rightarrow \infty} (a;b)_n .$$  We will also use $f_k := (q^k;q^k)_\infty$.

Two further congruences which will be useful to us include the classical fact due to Jacobi \cite{oeis} that \begin{equation}\label{cubeeq} f_1^3 = \sum_{n=0}^\infty (-1)^n (2n+1) q^{\binom{n+1}{2}} \equiv_2 \sum_{n=0}^\infty q^{\binom{n+1}{2}} \end{equation} and (\cite{BBG}, p. 301) \begin{equation}\label{f1f5} f_1 f_5 \equiv_2 f_1^6 + q f_5^6 .\end{equation}

\section{Congruences}\label{seccon}

Recall that $$1/\Psi(-q^5,q) =: \sum_{n=0}^\infty c_5(n) q^n.$$

We first show that $c_5(n)$ possesses many even arithmetic progressions.

\begin{theorem} $$\sum_{n=0}^\infty c_5(2n+1) q^n \equiv_2 \sum_{n \in \mathbb{Z}}^\infty q^{n(3n-2)}$$ and therefore, by arithmetic properties of the exponents, for any prime $p>3$ and all $n \geq 0$, it holds that $$c_5(2(pn+j)+1) \equiv 0 \pmod{2}$$ if $3^{-1}(j+3^{-1})$ is not a quadratic residue mod $p$.  For instance, it holds that $$c_2(10n+5) \equiv 0 \pmod{2} \quad \text{ and } \quad c_2(10+9) \equiv 0 \pmod{2}.$$ It also holds that for all $n \geq 0$, $$c_5(8n+5) \equiv 0 \pmod{2}$$ along with many similar congruences in residue classes for powers of 2.
\end{theorem}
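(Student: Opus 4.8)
The plan is to work entirely modulo $2$, where the reciprocal becomes tractable, and to organize the argument into a reduction of $\Psi(-q^5,q)$ to a genuine theta function, a two-dissection of its reciprocal, and finally elementary number theory to read off the congruences. First I would reduce $\Psi(-q^5,q)$ modulo $2$. With $a=-q^5$ and $b=q$ the exponent of $q$ in the general term is $5\binom{n+1}{2}+\binom{n}{2}=n(3n+2)$, while the negative-index terms, after $n\mapsto -n$, contribute exponents $n(3n-2)$. Since sign changes are invisible modulo $2$, the false sign flip on the negative branch disappears and the two branches recombine into the full bilateral sum, giving $\Psi(-q^5,q)\equiv_2\theta$, where $\theta:=\sum_{n\in\mathbb{Z}}q^{n(3n-2)}=f(q,q^5)$. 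Writing $R:=1/\Psi(-q^5,q)=\sum_n c_5(n)q^n$, we then have $R\theta\equiv_2 1$.

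The core step is a two-dissection. Because $n(3n-2)\equiv n\pmod 2$, the even part $\theta_e$ of $\theta$ collects the terms with $n$ even and the odd part $\theta_o$ those with $n$ odd; taking $n=2m+1$, factoring out $q$, and applying $m\mapsto -m$ gives the clean identity $\theta_o=q\,\theta(q^4)$. Letting $R_e,R_o$ denote the even and odd parts of $R$ and separating $R\theta\equiv_2 1$ into even- and odd-exponent parts yields
\[ R_e\theta_e+R_o\theta_o\equiv_2 1 \quad\text{and}\quad R_e\theta_o+R_o\theta_e\equiv_2 0. \]
Multiplying the first relation by $\theta_o$ and substituting from the second eliminates $R_e$, and using $\theta_e^2+\theta_o^2\equiv_2(\theta_e+\theta_o)^2=\theta^2$ leaves $R_o\theta^2\equiv_2\theta_o$. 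Now the squaring congruence from the background section gives $\theta^2\equiv_2\theta(q^2)$ and $\theta_o=q\,\theta(q^4)\equiv_2 q\,\theta(q^2)^2$, so dividing by the invertible series $\theta(q^2)$ produces $R_o\equiv_2 q\,\theta(q^2)$. Extracting coefficients is exactly the first assertion, $\sum_n c_5(2n+1)q^n\equiv_2\sum_{n\in\mathbb{Z}}q^{n(3n-2)}$.

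The remaining claims are then elementary. The exponents $n(3n-2)$ are pairwise distinct, so $c_5(2N+1)$ is odd precisely when $3N+1=(3n-1)^2$ is a perfect square; modulo a prime $p>3$ this forces $3j+1$ to be a quadratic residue, and since $3^{-1}(j+3^{-1})=3^{-2}(3j+1)$ differs from $3j+1$ by the square factor $3^{-2}$, that is exactly the stated criterion (and it disposes of the $p=5$ examples). The powers-of-two congruences come from reducing the exponents modulo $2^k$: a one-line check shows $n(3n-2)$ is never $\equiv 2\pmod 4$, whence $c_5(8n+5)\equiv_2[q^{4n+2}]\theta\equiv_2 0$, and the analogous residue analysis at higher powers of $2$ yields the rest. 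I expect the main obstacle to be pinning down the two-dissection precisely—verifying the identity $\theta_o=q\,\theta(q^4)$ together with the sign bookkeeping in the reduction of $\Psi$—since once $R_o\theta^2\equiv_2\theta_o$ is established the squaring congruences finish the computation almost immediately.
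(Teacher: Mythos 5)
Your proof is correct, and it takes a genuinely different route from the paper's. The paper passes through the Jacobi triple product to identify $\Psi(-q^5,q) \equiv_2 f_3^3/f_1$ (the $3$-core generating function), then invokes two external results: the Xia--Yao $2$-dissection $f_3^3/f_1 \equiv_2 f_1^8 + q f_3^{12}/f_1^4$ to split the reciprocal into even and odd parts, and Robbins' theorem on $3$-cores to identify the odd part with $\sum_{n\in\mathbb{Z}} q^{n(3n-2)}$. You instead stay with the bilateral series $\theta = \sum_{n\in\mathbb{Z}} q^{n(3n-2)}$ throughout: the index-parity dissection $\theta = \theta_e + q\,\theta(q^4)$ is elementary (and correct, since $n(3n-2)\equiv n \pmod 2$ and the exponents for $n=2m+1$ are $12m^2+8m+1$), and your linear elimination from the system $R_e\theta_e + R_o\theta_o \equiv_2 1$, $R_e\theta_o + R_o\theta_e \equiv_2 0$, combined with Frobenius squaring $\theta^2 \equiv_2 \theta(q^2)$, yields $R_o \equiv_2 q\,\theta(q^2)$ with no citations at all. (In fact your elimination can be shortened further: mod $2$, $1/\theta \equiv_2 \theta/\theta^2 \equiv_2 \theta/\theta(q^2)$, and since $1/\theta(q^2)$ is even, the odd part is immediately $q\,\theta(q^4)/\theta(q^2) \equiv_2 q\,\theta(q^2)$.) What each approach buys: yours is self-contained, more elementary, and the elimination trick plainly generalizes to reciprocals of other false theta functions; the paper's identifies the combinatorial object behind the congruence (parity of $3$-core counts) and produces the even part explicitly as an eta-quotient, which situates the result in standard modular-forms machinery. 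Your closing number theory — $c_5(2N+1)$ odd iff $3N+1=(3n-1)^2$, the quadratic-residue criterion via $3^{-1}(j+3^{-1}) = 3^{-2}(3j+1)$, and the observation that $n(3n-2) \not\equiv 2 \pmod 4$ — matches the paper's essentially verbatim, including the same mild convention that ``not a quadratic residue'' must exclude $0$ as well.
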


\begin{proof}  Employing the identities from the previous section, we have the following.

$$\Psi(-q^5,q) \equiv_2 f(q^5,q) = (q;q^6)_\infty (q^5;q^6)_\infty (q^6;q^6)_\infty \equiv_2 \frac{{(q^3;q^3)_\infty}^3}{(q;q)_\infty}.$$

Hence by the identity (Xia and Yao \cite{XiaYao}, Theorem 3.7 clauses 3.63 and 3.64), $$\frac{f_3^3}{f_1} \equiv_2 f_1^8 + q \frac{f_3^{12}}{f_1^4},$$ we have the congruence 

\begin{multline*} \frac{1}{\Psi(-q^5,q)} =: \sum_{n=0}^\infty a(n) q^n \equiv_2 \frac{(q;q)_\infty}{{(q^3;q^3)_\infty}^3} \equiv_2 \frac{{(q;q)_\infty}^{10}}{{(q^3;q^3)_\infty}^6} + q \frac{{(q^3;q^3)_\infty}^6}{{(q;q)_\infty}^2}.
\end{multline*}

Thus $$\sum_{n=0}^\infty c_5(2n+1) q^n \equiv_2 \frac{(q^3;q^3)_\infty^3}{(q;q)_\infty} \equiv \sum_{n \in \mathbb{Z}}^\infty q^{n(3n-2)}.$$ But the middle term is the generating function for the well-understood 3-\emph{core partitions}, and the final equivalence follows by Theorem 7 of Robbins \cite{Robbins}.

Now we note that the equation $mp+j = n(3n-2) = 3n^2 - 2n$ implies the following congruences, where inverses are taken mod $p$ for $p$ a prime larger than 3: \begin{align*}j &\equiv 3n^2 - 2n \pmod{p} \\
3^{-1} j &\equiv n^2 - 2 \cdot 3^{-1} n \pmod{p} \\
3^{-1}(j+3^{-1}) &\equiv (n - 3^{-1})^2 \pmod{p}.
\end{align*}

Hence if $3^{-1}(j+3^{-1})$ is not a quadratic residue modulo $p$, then it must hold that $c_5(2(pn+j)+1) \equiv 0 \pmod{2}$.  For the instance given in the theorem, $3n^2 - 2n$ takes on the values 0, 1, and 3 modulo 5, and hence $c_2(2(5n+j)+1) \equiv 0 \pmod{2}$ for $j = 2$ and $j=4$.  For the prime 2, we note that $3n^2 - 2n$ is only 0 or 1 modulo 4, and so $2n+1$ must miss $2(4k+2)+1$ for all integer $k$, along with all the other congruences modulo powers of 2 that can be shown by the same analysis.

\end{proof}

Next, by considerably more involved means,

\begin{theorem} $c_5(32n+31) \equiv 0 \pmod{4}.$
\end{theorem}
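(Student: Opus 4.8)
The plan is to upgrade the argument of Theorem~1 from modulo $2$ to modulo $4$, and the first thing I would record is that the mod-$2$ vanishing in this progression is automatic, so that the whole content of the theorem sits in the factor of $2$ beyond it. Writing out the defining sum gives the exact identity
$$\Psi(-q^5,q)=\sum_{k\in\mathbb Z} s_k\epsilon_k\,q^{3k^2+2k},\qquad \epsilon_k=(-1)^{k(k+1)/2},\quad s_k=\begin{cases}+1,&k\ge0,\\ -1,&k<0,\end{cases}$$
whose mod-$2$ reduction is the ordinary theta function $\Theta:=f(q^5,q)=\sum_k q^{3k^2+2k}$ already used in Theorem~1. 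Since the odd part of $1/\Psi$ reduces mod $2$ to the $3$-core series $\sum_k q^{3k^2-2k}$, and since $3k^2-2k$ runs only through the residues $\{0,1,5,8\}$ modulo $16$, the coefficient of $q^{16n+15}$ in that series vanishes identically; hence $c_5(32n+31)\equiv 0\pmod 2$ with no work, and the task is genuinely the refinement to modulus $4$.

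To capture that factor I would invert $\Psi$ modulo $4$. Collecting the terms on which $s_k\epsilon_k=-1$ into a single series $E:=\sum_{k:\,s_k\epsilon_k=-1} q^{3k^2+2k}$, one has $\Psi=\Theta-2E$; because the exponents $3k^2+2k$ are pairwise distinct, $E$ is a genuine lacunary $0/1$ series with no internal cancellation. Then, from $\Psi(\Theta+2E)=\Theta^2-4E^2\equiv_4\Theta^2$,
$$\frac{1}{\Psi}\equiv_4 \frac{1}{\Theta}+\frac{2E}{\Theta^2}.$$
Here $\Theta$ is an honest eta-quotient, $\Theta=\dfrac{f_2^2 f_3 f_{12}}{f_1 f_4 f_6}$, so $1/\Theta$ is accessible by the standard machinery, while the correction is needed only modulo $2$ in its argument, where $1/\Theta^2\equiv_2 f_2/f_6^3$ simplifies.

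Writing $g(q):=\sum_n c_5(2n+1)q^n$ for the odd-part generating function and reducing the displayed inversion to its odd part gives $g\equiv_4 \Theta_3+2G$, where $\Theta_3=\sum_k q^{3k^2-2k}$ is the mod-$2$ main term from Theorem~1 and $G$ is the accumulated correction coming from both $1/\Theta$ and $2E/\Theta^2$. Then
$$c_5(32n+31)=[q^{16n+15}]g\equiv [q^{16n+15}]\Theta_3+2\,[q^{16n+15}]G\equiv 2\,[q^{16n+15}]G \pmod 4,$$
the first term vanishing by the residue computation above, so it remains to prove $[q^{16n+15}]G\equiv 0\pmod 2$. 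I would do this by the iterated $2$-dissection used in Theorem~1, repeatedly separating even and odd parts via Jacobi's identity \eqref{cubeeq}, the identity \eqref{f1f5}, and the Xia--Yao dissection $f_3^3/f_1\equiv_2 f_1^8+qf_3^{12}/f_1^4$, applied now to $G$ and peeling off the four binary digits of $15=1111_2$ one dissection at a time.

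The main obstacle is exactly the control of $G$, that is, of the correction $2E/\Theta^2$, through these dissections. The clean part of the argument, the vanishing of the theta main term, is forced by a quadratic-form residue obstruction; the delicate part is to show that the convolution $E\cdot(f_2/f_6^3)$, after four stages of dissection tracking the progression $16n+15$, contributes only even coefficients. I expect this to rest on a second residue obstruction for the relevant dissected form, and the verification that the unstructured series $E$ does not disturb that obstruction is precisely where the ``considerably more involved'' computation lives.
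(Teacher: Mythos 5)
Your setup is correct and is a genuinely different route from the paper's. You write $\Psi(-q^5,q)=\Theta-2E$ exactly, with $\Theta=f(q^5,q)$ an honest theta function and $E$ a $0/1$ partial theta series, and invert modulo $4$ to get $1/\Psi\equiv_4 1/\Theta+2E/\Theta^2$; your identification of $\Theta$ with the eta-quotient $f_2^2f_3f_{12}/(f_1f_4f_6)$ is right, and your observation that the mod-$2$ part of the theorem is already forced by Theorem 1 (since $3k^2-2k$ only hits the residues $\{0,1,5,8\}$ modulo $16$) is correct. The paper instead dissects $\Psi$ itself via Lemma \ref{evenoddfalsetheta} into $A(q^8)-qB(q^4)$, rationalizes the denominator by multiplying through by $(A+qB)(A^2+q^2B^2)\cdots(A^{16}+q^{16}B^{16})$ so that the denominator becomes a series in $q^{32}$ modulo $4$, and then kills the $32n+31$ coefficients of the numerator by a long chain of explicit cancellations. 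Your decomposition is cleaner bookkeeping: it isolates exactly which ingredient needs mod-$4$ information ($1/\Theta$) and which needs only mod-$2$ information ($E/\Theta^2$).

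However, the proof is not complete, and the gap is precisely the content of the theorem. Everything reduces to your final claim $[q^{16n+15}]G\equiv 0\pmod 2$, and for this you offer only a plan (``I would do this by the iterated $2$-dissection\dots'') and an expectation (``I expect this to rest on a second residue obstruction''), which you yourself flag as the unproven core. There is no reason to expect a routine residue obstruction here: $G$ has two pieces, and neither is a theta function. The piece coming from $1/\Theta$ requires the odd part of $1/\Theta$ \emph{modulo $4$}, i.e.\ a mod-$4$ expansion of the reciprocal of an eta-quotient, which is a computation of the same order of difficulty as the one the paper performs; and the piece $E\cdot f_2/f_6^3$ involves the partial theta series $E$ (supported on $3k^2+2k$ over only half the integer lattice), whose convolution with $f_2/f_6^3$ is not constrained by any evident quadratic-form argument --- partial theta series have no product formula, which is exactly why the paper works with dissections of $\Psi$ itself rather than splitting off $E$. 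The paper's ``tedious cancellations'' are what fill this hole; your proposal reorganizes the problem attractively but leaves the essential verification undone.
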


\begin{proof} We require several dissections and congruences:

\begin{align*}
\Psi(-q^5,q) &= \Psi(-q^{16},-q^8) - q \Psi(-q^{20},-q^4) =: A(q^8) - q B(q^4) \\
B(q) &= \Psi(q^{16},q^8) + q \Psi(q^{20},q^4) =: F(q^8) + q G(q^4) \\
G(q) &= F(q^8) - q G(q^4) \\
\left(\sum_{n=0}^\infty a(n) q^n \right)^4 &\equiv_4 \left(\sum_{n=0}^\infty a(n) q^{2n} \right)^2 \\
A(q)^2 &\equiv_4 F(q)^2 \equiv_4 H(q^2) + 2q I(q^2) \text{ for some } H, I \\
B(q)^2 &\equiv_4 G(q)^2.
\end{align*}

The latter two statements are consequences of the fact that signs might matter only on cross terms, but these appear multiplied by 2, and $2 \equiv -2 \pmod{4}$.

The first three clauses are cases of a more general dissection of false theta functions of this type.  The dissection for theta functions is known; it is for instance a corollary of the more general identity of \cite{Bailey}, p. 220, Eq. (4.1)) given below. $$f(z,q/z) = f(z^2 q, z^{-2} q^3) + z f(z^2 q^3, z^{-2} q).$$ In this if we substitute $q \rightarrow q^{a+b}, z \rightarrow q^b$ and employ the symmetry of $f(r,s)$ in its arguments, we obtain $$f(q^a, q^b) =  f(q^{a+3b}, q^{3a+b}) + q^b f(q^{3a+5b}, q^{a-b}).$$  When both of $a$ and $b$ are odd, this is an even-odd dissection, i.e. the first term has only even powers of $q$ and the latter only odd.

We state here the analogous false theta dissection, which appears to be new in the literature.  (For instance, it does not appear in Sills \cite{Sills}, a fairly extensive recent survey of related work.)  It is not symmetric in its arguments, so there are two cases.

\begin{lemma}\label{evenoddfalsetheta}
Let $\epsilon \in \{ 1, -1 \}$, and let $ a > b \geq 0$ be integers.  Then $$\Psi(\pm q^a, \pm \epsilon q^b) = \Psi(\epsilon q^{3a+b}, \epsilon q^{a+3b}) \pm (-\epsilon) q^b \Psi(\epsilon q^{3a+5b}, \epsilon q^{a-b}).$$
If instead $b > a \geq 0$, then $$\Psi(\pm q^a, \pm \epsilon q^b) = \Psi(\epsilon q^{3a+b}, \epsilon q^{a+3b}) \pm q^a \Psi(\epsilon q^{5a+3b}, \epsilon q^{b-a}).$$
\end{lemma}

Here the $\pm$ signs are not independent.  That is to say, the arguments of the functions on the right hand side are all positive if the signs of the arguments on the left hand side match, and negative if they mismatch; and the sign between the terms on the right hand side is either  the opposite of the sign on the second argument on the left hand side if $a > b$, or the same as that of the first argument if $ a < b$.

\begin{proof} Expand the left hand side from $$\sum_{n=0}^\infty q^{a \binom{n+1}{2}} q^{b \binom{n}{2}} - \sum_{n=-\infty}^{-1} q^{a \binom{n+1}{2}} q^{b \binom{n}{2}}$$ by separating the indices by their residue modulo 4.  Expand the binomials and observe their parities, using the fact that $\binom{n+1}{2} \equiv 0 \pmod{2}$ iff $n \equiv 0, 3 \pmod{4}$.  Gather the appropriate terms by signs.  Sometimes it will be necessary to reverse signs and shift an index by 1.

For example, the most immediately relevant case of the lemma is $$\Psi(-q^a,q^b) = \Psi(-q^{3a+b},-q^{a+3b})-q^b \Psi(-q^{3a+5b},-q^{a-b})$$ with $a$ and $b$ both odd, and $a > b$. We expand thus:

\begin{align*}
\Psi(-q^a,q^b) &= \sum_{{n \geq 0} \atop {n = 4i}} q^{a(8i^2+2i)+b(8i^2-2i)} - \sum_{{n \geq 0} \atop {n = 4i+1}} q^{a(8i^2+6i+1)+b(8i^2+2i)} \\
 & \quad - \sum_{{n \geq 0} \atop {n = 4i+2}} q^{a(8i^2+10i+3)+b(8i^2+6i+1)} + \sum_{{n \geq 0} \atop {n = 4i+3}} q^{a(8i^2+14i+6)+b(8i^2+10i+3)}\\
&- \sum_{{n < 0} \atop {n = 4i}} (\dots) + \sum_{{n < 0} \atop {n = 4i+1}} (\dots) + \sum_{{n < 0} \atop {n = 4i+2}} (\dots) - \sum_{{n < 0} \atop {n = 4i+3}} (\dots)  \\
&= \sum_{i \geq 0} (-q^{3a+b})^{\binom{i+1}{2}}(-q^{a+3b})^{\binom{i}{2}} - \sum_{i < 0} (-q^{3a+b})^{\binom{i+1}{2}}(-q^{a+3b})^{\binom{i}{2}} \\
 & \quad - q^b \left(\sum_{j \geq 0} (-q^{3a+5b})^{\binom{j+1}{2}} (-q^{a-b})^{\binom{j}{2}} -\sum_{j < 0} (-q^{3a+5b})^{\binom{j+1}{2}} (-q^{a-b})^{\binom{j}{2}} \right).
\end{align*}

In the last line the first, third, fifth, and seventh terms of the first line became the first and second summands, using such identities as $$(3a+b)\binom{2j+2}{2} + (a+3b)\binom{2j+1}{2} = a(8j^2+10j+3) + b(8j^2+6j+1).$$  The remaining terms became the summands under the factor $q^b$ after the substitutions $i \rightarrow -i, j = i-1$.  Other cases are minor variations on this proof.  \end{proof}

Then: 

\begin{multline*}
\frac{1}{\Psi(-q^5,q)} = \frac{1}{A(q^8) - q B(q^4)} \\
= \frac{1}{A(q^8)^{32} - q^{32} B(q^4)^{32}} \left[ \left((A(q^8)+q B(q^4)\right) \right. \\
 \times \left. \left((A(q^8)^2+q^2 B(q^4)^2 \right) \dots  \left((A(q^8)^{16}+q^{16} B(q^4)^{16} \right) \right] \\
\equiv :_4 \frac{1}{D(q^{32})} \left[ \left(A(q^{32})^4 + q^{16} B(q^{32})^2\right) \left(A(q^{32})^2 + q^{8} B(q^{16})^2 \right) \right. \\ 
\times \left. \left(A(q^{16})^2 + q^{4} B(q^{8})^2 \right) \left(A(q^{8})^2 + q^{2} B(q^{4})^2 \right) (A(q^8) + q B(q^4))  \right] .
\end{multline*}

Now the denominator is nonzero modulo 4, and is a function of $q^{32}$, so the theorem is true if and only if the coefficients of $q^{32n+31}$ in the numerator all cancel modulo 4.

Begin by noticing every function is at least in terms of $q^4$, so we cannot avoid selecting the $q B(q^4)$ and $q^2 B(q^4)^2$ terms from the product.  Removing a $q^{3}$ and reducing powers by a factor of 4, it remains to examine the $8n+7$ terms in 

$$B(q)^3 \left(A(q^{8})^4 + q^{4} B(q^{8})^2\right) \left(A(q^{8})^2 + q^{8} B(q^{4})^2 \right) \left(A(q^{4})^2 + q B(q^{2})^2 \right).$$

We expand out any $A$ or $B$ that aren't already in powers of $q^8$, and then expand the product.  We select out the terms that contribute to $8n+7$, and begin using identities to cancel terms.  Eventually, everything disappears mod 4.  

The actual cancellations would be tedious to display, so we give brief examples of the two most common types.

After expansion, the term with highest ``external'' power of $q$ is \begin{multline*} \left(-2 A^4(q^8)F(q^8)F(q^{16}) G(q^8)G^4(q^{16}) \right. \\ \left. +2A^2(q^8)B^2(q^8)G^3(q^{16})I(q^8)-2B^2(q^8)F^2(q^8) G^3(q^{16}) I(q^8) \right) q^{16}.\end{multline*}

Now \begin{multline*} A^2(q^8)B^2(q^8)G^3(q^{16})I(q^8) - B^2(q^8)F^2(q^8) G^3(q^{16}) I(q^8) \\
= (A^2(q^8) - F^2(q^8)) (B^2(q^8)G^3(q^{16})I(q^8)) \equiv_4 0.\end{multline*}

Thus this term becomes $$\left(-2 A^4(q^8)F(q^8)F(q^{16}) G(q^8)G^4(q^{16}) \right) q^{16}.$$

In a later step, we employ the fact that $F(q^{128}) + q^{16} G(q^{64}) = B(q^{16})$ to gather two terms: \begin{multline*} -2 (A^4(q^8) F(q^8) F(q^{16}) G(q^8)) ( F(q^{128}) + q^{16} G(q^{64}) ) \\ \equiv_4 -2 A^4(q^8) F(q^8) F(q^{16}) G(q^8) B(q^{16}).\end{multline*}

By dint of repeated uses of such properties, eventually we show that all terms cancel mod 4.

\end{proof}

For $c_9$ we have an infinite class of congruences modulo 2 within the residue class $8n+4$.

\begin{theorem} It holds that $\sum_{n=0}^\infty c_9(8n+4) q^n \equiv_2 \Psi(-q^{14},-q^6)$.  As a result, $c_9(8n+4)$ possesses many even progressions of the form $c_9(An+B) \equiv 0 \pmod{2}$ for all $n$, including $$(A,B) \in \{ (16,12), (24,12), (56,20), (56,28), (56,44) , \dots \}.$$  More generally, $c_9(8n+4) \equiv 0 \pmod{2}$ if $n$ cannot be represented in the form $n = 10k^2 - 4k$ for integer $k$.
\end{theorem}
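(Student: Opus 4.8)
The plan is to reduce everything modulo $2$ to the theta function $f(q^9,q)$ and then invert it by a sequence of ``conjugate'' rationalizations, exactly as in the preceding theorem but with the simpler modulus. First I would flatten all signs: since $-1\equiv_2 1$, the two half-sums defining $\Psi$ merge and $$\Psi(-q^9,q)\equiv_2 f(q^9,q)=\sum_{n\in\mathbb Z}q^{5n^2+4n}.$$ Splitting this sum according to the residue of $n$ modulo $4$ and examining $5n^2+4n$ modulo $8$, one finds powers $\equiv0\pmod 8$ (from $n\equiv0$), powers $\equiv4\pmod 8$ (from $n\equiv2$), and powers $\equiv1\pmod8$ (from $n$ odd). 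Gathering the first two into a series $g(q^4)$ and the last into $qh(q^4)$, where $g(q)=\sum_m q^{20m^2+4m}+\sum_m q^{20m^2+24m+7}$ and $h(q)=\sum_m q^{5m^2+7m+2}$, I obtain the dissection $\Psi(-q^9,q)\equiv_2 g(q^4)+qh(q^4)$. (Equivalently this splitting follows from Lemma~\ref{evenoddfalsetheta} applied to $\Psi(-q^9,q)$.)

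Next I would invert modulo $2$. Writing $D_0=g(q^4)+qh(q^4)$ and multiplying successively by the conjugate factors $g(q^{4\cdot2^j})-q^{2^j}h(q^{4\cdot2^j})$, each step replaces the denominator by $g(q^{8\cdot2^j})+q^{2^{j+1}}h(q^{8\cdot2^j})$ modulo $2$, using $u^2-v^2\equiv_2 u^2+v^2$ and $F(q)^2\equiv_2 F(q^2)$. Tracking residues modulo $8$ (the surviving $h$-term moves through powers $1\mapsto2\mapsto4\mapsto0$), three such steps push the denominator into powers $\equiv0\pmod8$, giving $$\frac{1}{\Psi(-q^9,q)}\equiv_2\frac{\bigl(g(q^4)+qh(q^4)\bigr)\bigl(g(q^8)+q^2h(q^8)\bigr)\bigl(g(q^{16})+q^4h(q^{16})\bigr)}{g(q^{32})+q^8h(q^{32})},$$ in which the denominator is a function of $q^8$.

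Now I would extract the class $8n+4$. Since the denominator is a function of $q^8$, it suffices to collect the powers $\equiv4\pmod 8$ in the numerator. The only mixed factor is $g(q^4)$, which I split further as $g(q^4)=g_0(q^8)+q^4g_1(q^8)$ with $g_0(q)=\sum_m q^{10m^2+2m}$ and $g_1(q)=\sum_m q^{10m^2+12m+3}$; every other piece lies in a single residue class modulo $8$. A short check of which triples of residues sum to $4$ leaves exactly two surviving products, and after dividing by $q^4$ and replacing $q^8$ by $q$ one is left with $$\sum_{n=0}^\infty c_9(8n+4)q^n\equiv_2\frac{g(q)\bigl(g_0(q)h(q^2)+g_1(q)g(q^2)\bigr)}{g(q^4)+qh(q^4)},$$ whose denominator is again $\Psi(-q^9,q)\equiv_2 f(q^9,q)$. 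Via $\Psi(-q^{14},-q^6)\equiv_2\sum_n q^{10n^2+4n}$, the claim $\sum_n c_9(8n+4)q^n\equiv_2\Psi(-q^{14},-q^6)$ is therefore equivalent to the single theta identity $$g(q)\bigl(g_0(q)h(q^2)+g_1(q)g(q^2)\bigr)\equiv_2\Bigl(\sum_n q^{5n^2+4n}\Bigr)\Bigl(\sum_n q^{10n^2+4n}\Bigr).$$ I expect this identity to be the main obstacle: because it is genuinely a congruence (the two sides do not share a modular weight, so Sturm's bound does not apply directly), I would prove it by elementary theta-series manipulation, expanding each factor by the Jacobi Triple Product and the addition formula behind Lemma~\ref{evenoddfalsetheta} and matching the resulting bilateral sums term by term, in the style of the computation displayed in that lemma; a low-order expansion already confirms agreement and fixes the bookkeeping.

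Finally, the congruences are immediate once the main identity is in hand. Since $\sum_n c_9(8n+4)q^n\equiv_2\sum_{k\in\mathbb Z}q^{10k^2-4k}$, the coefficient $c_9(8n+4)$ is odd precisely when $n=10k^2-4k$ for some integer $k$. Writing the class $An+B$ in the form $8m+4$ and completing the square in $10k^2-4k$ (note $10(10k^2-4k)+4=(10k-2)^2$), one sees that $m$ is represented only when $10m+4$ is a quadratic residue modulo the relevant prime; the listed pairs are exactly the classes $m$ that $10k^2-4k$ never meets. For instance $10k^2-4k=2k(5k-2)$ is always even, which already forces $c_9(16n+12)\equiv0\pmod 2$, and the remaining pairs come from the excluded residues of $10k^2-4k$ modulo $3$ and $7$. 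This last step is the same quadratic-residue analysis used in the first theorem.
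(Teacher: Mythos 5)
Your setup is sound and in fact tracks the paper's own proof almost step for step: modulo $2$ your $g$, $h$, $g_0$, $g_1$ are exactly the paper's $A(q)$, $B(q^2)$, $C(q^4)$, $qD(q^2)$ obtained from Lemma \ref{evenoddfalsetheta}; your telescoping conjugate-multiplication is the same rationalization; and your extraction of the class $8n+4$ isolates precisely the same two surviving products. Your reformulation of the theorem as the single congruence
$$g(q)\bigl(g_0(q)h(q^2)+g_1(q)g(q^2)\bigr)\equiv_2\Bigl(\sum_n q^{5n^2+4n}\Bigr)\Bigl(\sum_n q^{10n^2+4n}\Bigr)$$
is also correct: it is the paper's key identity $C(q^4)B(q^4)+qD(q^2)A(q^2)\equiv_2 A(q)D(q)$ multiplied through by the series $A(q)$, which is invertible over $\mathbb{F}_2$, so the two are equivalent. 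The quadratic-residue endgame for the listed progressions is fine as well.

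The genuine gap is that you never prove this identity --- you explicitly defer it as ``the main obstacle'' and offer only a strategy plus a low-order expansion, which is not a proof. Moreover, the strategy as stated would fail: each side is a product of two or three unary theta series, i.e., a multidimensional theta series attached to a quadratic form of rank at least $2$, so there is no term-by-term matching of bilateral sums available, in contrast to the proof of Lemma \ref{evenoddfalsetheta}, where each side was a single bilateral sum being re-indexed. Some genuine arithmetic input is required, and this is exactly what the paper supplies: writing each flattened false theta function as a triple product via Jacobi, the identity becomes a congruence of eta-products, namely $f_4^2f_5^2f_{20}+qf_1^2f_{10}^6\equiv_2 f_1f_2f_5f_{10}^3$, which after cancelling common factors and using $f_1^2\equiv_2 f_2$ reduces to $f_1f_5\equiv_2 f_1^6+qf_5^6$ --- the known Blecksmith--Brillhart--Gerst congruence (\ref{f1f5}) together with Jacobi's identity (\ref{cubeeq}). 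Without this ingredient (or some equivalent substitute), your argument only establishes that the theorem is equivalent to an unproven theta-product congruence, not the theorem itself.
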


\begin{proof} By Lemma \ref{evenoddfalsetheta}, we have that $$\Psi(-q^9,q) = \Psi(-q^{28},-q^{12}) - \Psi(-q^{32},-q^8) := A(q^4) - q B(q^8).$$  Likewise $$A(q) = \Psi(-q^7,-q^3) = \Psi(q^{24},q^{16}) + q^3 \Psi(q^{36},q^4) := C(q^8) + q^3 D(q^4).$$

We now observe \begin{multline*}\frac{1}{\Psi(-q^9,q)} \\ = \frac{(A(q^4) + q B(q^8))(A(q^8) + q^2 B(q^{16}))(A(q^{16}) + q^4 B(q^{32}))}{(A(q^{32}) - q^{8} B(q^{64}))} \\ \equiv_2 \frac{(C(q^{32}) + q^{12} D(q^{16}) + q B(q^8))(A(q^8) + q^2 B(q^{16}))(A(q^{16}) + q^4 B(q^{32}))}{(A(q^{32}) - q^{8} B(q^{64}))} \end{multline*}

Extracting terms $q^{8n+4}$, we find that: 

$$\sum_{n=0}^\infty c_9(8n+4) q^n =  \frac{A(q)}{A(q^{4}) - q B(q^{8})} \left[ C(q^4) B(q^4) + q D(q^2) A(q^2) \right].$$

Hence the theorem will be proved if we can show the identity $$C(q^4) B(q^4) + q D(q^2) A(q^2) \equiv_2 A(q) D(q).$$

Expressing this via the Jacobi Triple Product, we find that this identity is equivalent to \begin{multline*}(q,q^9,q^{10};q^{10})_\infty \equiv_2 (q^{12},q^8,q^{20};q^{20})_\infty (q^{16},q^4,q^{20};q^{20})_\infty \\ + q (q^{18},q^2,q^{20};q^{20})_\infty (q^{14},q^6,q^{20};q^{20})_\infty .\end{multline*}

Multiplying various terms by instances of $\frac{(q;q^{2})_\infty}{(q;q^{2})_\infty}$, $\frac{(q^5;q^{10})_\infty}{(q^5;q^{10})_\infty}$ and similar factors, we can rewrite this an an identity of eta-products, equivalent to $$f_4^2 f_5^2 f_{20} + q f_1^2 f_{10}^6 \equiv_2 f_1 f_2 f_5 f_{10}^3.$$

Clearing common factors and employing the equivalence $f_1^2 \equiv_2 f_2$, we find that the above claim is equivalent to $$f_1 f_5 \equiv_2 f_1^6 + q f_5^6,$$ \noindent the identity given by congruences (\ref{f1f5}) and (\ref{cubeeq}), and the desired congruence is proved. 

To complete the theorem we now observe that $\Psi(-q^{6},-q^{14})$ features terms $q^n$ with powers $$n = 6 \binom{k+1}{2} + 14 \binom{k}{2} = 10k^2 - 4k.$$  Quick calculations now yield the many arithmetic progressions entirely avoided by the series.  (The theorem could also have been stated in terms of the quadratic residues avoided by $10k^2 - 4k$ modulo $m$ among $8(mn+j)+4$.)

\end{proof}

\section{Asymptotics}

It is interesting to compare the false theta function $\Psi(-q^2,q)$ and the function $(q;q)_\infty$, as the first only differs by signs from the second: 

\begin{align*}
\psi(-q^2,q) &= 1 - q - q^2 + q^5 - q^7 + q^{12} + q^{15} - q^{22} + q^{26} - \dots \\
(q;q)_\infty &= 1 - q - q^2 + q^5 + q^7 - q^{12} - q^{15} + q^{22} + q^{26} - \dots
\end{align*}

Nonzero coefficients occur when the power on $q$ is one of the \emph{pentagonal numbers} given by $n = \frac{m}{2}(3m-1)$ for $m \in \mathbb{Z}$.  The sign pattern for $(q;q)_\infty$ is $-,-,+,+$, while the signs in the false theta function have period 8, given by $-,-,+,-,+,+,-,+$.  This claim is easily proved by simple examination, by residue class modulo 4, of the binomial numbers appearing in the original definition of the false theta function and their signs.

Recall that the partition function $p(n)$ counting the number of partitions of $n$ has generating function $$\frac{1}{(q;q)_\infty} = \sum_{n=0}^\infty p(n) q^n.$$  The partition function was shown by Hardy and Ramanujan \cite{HR} to have subexponential growth, with main asymptotic $$p(n) \approx \frac{1}{4n\sqrt{3}} e^{\pi \sqrt{2n/3}}.$$  If we consider $$\frac{1}{\Psi(-q^2,q)},$$ then heuristically we might expect that ``earlier negative signs make larger coefficients.''  In fact, growth is now exponential: 

\begin{theorem} The coefficients $c_2(n)$ grow exponentially with base $b$ in the interval $1.53623 < b < 1.54522$.
\end{theorem}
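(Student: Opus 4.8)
The plan is to get two‑sided exponential bounds on $c_2(n)$ by controlling the location of the dominant singularity of $1/\Psi(-q^2,q)$, i.e. the smallest positive real zero of $\Psi(-q^2,q)$ on $(0,1)$. Writing $\Psi(-q^2,q) = \sum_{m\in\mathbb Z} \varepsilon_m\, q^{m(3m-1)/2}$ with the explicit period‑8 sign pattern $-,-,+,-,+,+,-,+$ recorded just above the statement, the key structural fact is that this is an entire function of $q$ with a convergent power series on $|q|<1$, and the exponential growth rate of its reciprocal's coefficients is governed by $1/\rho$, where $\rho\in(0,1)$ is the radius of convergence of $1/\Psi(-q^2,q)$. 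Since $\Psi(-q^2,q)$ is real, continuous, and equals $1$ at $q=0$, and since the negative‑sign pentagonal terms dominate for $q$ near $1$, $\Psi(-q^2,q)$ has a first real zero $\rho$ in $(0,1)$; the base $b$ in the theorem is exactly $1/\rho$, and the numerical interval $1.53623 < b < 1.54522$ corresponds to pinning $\rho$ into a narrow window around $0.6472$.

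First I would make precise the analytic input: establish that $\Psi(-q^2,q)$ has a simple real zero $\rho$ that is strictly smaller in modulus than any other zero, so that $1/\Psi(-q^2,q)$ has a simple pole at $q=\rho$ dominating its singularity structure. For the upper bound on the zero (hence lower bound on $b$), I would evaluate the partial sums $S_N(q) = \sum_{|m|\le N} \varepsilon_m q^{m(3m-1)/2}$ at a carefully chosen rational point and show $S_N$ changes sign there, with an explicit tail estimate $\bigl|\Psi(-q^2,q) - S_N(q)\bigr| \le \sum_{|m|>N} q^{m(3m-1)/2}$ small enough not to disturb the sign; this localizes $\rho$ from one side. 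For the other side I would similarly bound $\Psi(-q^2,q)$ away from $0$ (positive) on a slightly smaller interval, again by truncation plus a geometric‑type tail bound on the pentagonal exponents. The two numerical evaluations at the endpoints of the target window for $\rho$ produce the stated interval for $b = 1/\rho$.

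The second ingredient is a transfer theorem: once $q=\rho$ is a simple dominant pole of the meromorphic reciprocal, singularity analysis (or a direct partial‑fractions/contour argument) gives $c_2(n) \sim C\,\rho^{-n}$, and in particular $c_2(n)$ grows like $b^n$ with $b = 1/\rho$. To convert the asymptotic equivalence into the honest two‑sided bound $1.53623 < b < 1.54522$ claimed in the statement, I would combine the localization of $\rho$ with a crude but rigorous comparison: an upper bound on $c_2(n)$ from the geometric series bound on $1/|\Psi|$ inside a disk of radius just below $\rho$, and a lower bound from the positivity of the residue at $\rho$ together with control of the remaining poles being strictly farther out. The sign pattern being explicitly periodic is what lets me truncate cleanly and keep the tail under control throughout.

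The main obstacle I expect is not the transfer step but verifying rigorously that $\rho$ is genuinely the unique dominant singularity, i.e. that no other zero of $\Psi(-q^2,q)$ (possibly complex) lies on or inside the circle $|q|=\rho$. Because $\Psi(-q^2,q)$ is a lacunary‑type series with a nontrivial period‑8 sign pattern rather than a pure theta function, I cannot appeal to a product formula, so ruling out a smaller‑modulus complex zero requires a quantitative argument — for instance, bounding $\bigl|\Psi(-q^2,q)\bigr|$ below on the circle $|q|=r$ for $r$ slightly less than the real zero by showing the leading terms $1 - q - q^2$ dominate the remaining tail there. Making that winding‑number or minimum‑modulus estimate airtight, uniformly over the circle, is the delicate part; everything else reduces to explicit truncated evaluations and standard coefficient asymptotics.
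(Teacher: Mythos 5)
Your approach is genuinely different from the paper's: you work with the analytic structure of $1/\Psi(-q^2,q)$ --- a dominant singularity at the smallest zero $\rho$ of $\Psi(-q^2,q)$, followed by a transfer theorem --- whereas the paper never leaves the realm of elementary recurrences. The paper sandwiches $c_2(n)$ between solutions $b(n) \le c_2(n) \le a(n)$ of two \emph{finite} linear recurrences, obtained by truncating the pentagonal-type recurrence for $c_2$ after the $q^{26}$ term and after the $q^7$ term respectively; the sandwich is proved by a multidimensional induction on the differences $c_2(n)-c_2(n-z)$, and the theorem's two endpoints $1.53623$ and $1.54522$ are precisely the largest roots of the characteristic polynomials $x^{26}-x^{25}-x^{24}+x^{21}-x^{19}+x^{14}+x^{11}-x^4+1$ and $x^7-x^6-x^5+x^2-1$. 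In your language those endpoints are exactly the reciprocals of the real zeros of the degree-$26$ and degree-$7$ truncations of $\Psi(-q^2,q)$, so the two methods are closely related; yours, if completed, would actually be stronger (it pins the base to the single value $1/\rho \approx 1.536$ rather than an interval), while the paper's avoids complex analysis entirely.

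However, your proposal contains a genuine unresolved gap, which you correctly flag as ``the delicate part'' but do not resolve: showing that $\rho$ is a simple zero and the \emph{unique} zero of $\Psi(-q^2,q)$ in a disk $|q| \le \rho + \delta$. The route you sketch --- bound $|\Psi(-q^2,q)|$ below on circles $|q|=r$ with $r$ slightly less than $\rho$ by showing $1-q-q^2$ dominates the tail --- cannot work as stated: at the real point $q=r$ one has $|1-r-r^2| \approx 0.075$ while the tail $r^5-r^7+r^{12}+\cdots \approx 0.075$ as well (their near-cancellation is exactly why $\Psi$ vanishes at $\rho$), so there is no dominance near the positive real axis, and the estimate would have to be replaced by an arc-splitting argument or a Rouch\'e comparison of $\Psi$ with a polynomial truncation together with a certified count of that polynomial's zeros. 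Two further points: you cannot invoke Pringsheim to place the dominant singularity on the positive real axis, because your argument never establishes that the $c_2(n)$ are nonnegative --- in the paper, positivity (indeed monotonicity) of $c_2(n)$ is itself a consequence of the recurrence argument; and $\Psi(-q^2,q)$ is not entire, only analytic in $|q|<1$ with the unit circle as natural boundary, which is harmless here since $\rho<1$ but is part of why zero-free regions must be argued quantitatively rather than structurally. The numerical localization of $\rho$ by truncation plus tail bounds is routine and fine; it is the dominance step on which, as written, the proof remains open.
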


\begin{proof}  We begin by observing that $c_2(n)$ can be written as the unique solution of an infinite recurrence, given by $c_2(n) = 0$ for $n < 0$, $c_2(0) = 1$, and for $n>0$, \begin{multline*}c_2(n) = c_2(n-1) + c_2(n-2) - c_2(n-5) + c_2(n-7) \\ - c_2(n-12) - c_2(n-15) + c_2(n-22) - c_2(n-26) + \dots .\end{multline*}

Signs thereafter repeat periodically.

Since the first few values of $c_2(n)$ for $0 \leq n \leq 27$ are $1, 1, 2, 3, 5, 7, 11, 17, \dots$ and form an increasing sequence, and thereafter terms in the recurrence are added before any equal number are subtracted, it immediately follows that $c_2(n)$ is strictly increasing for $n>0$.

We next observe that this periodicity of signs means that the values of $c_2(n)$ are bounded below by the values of a sequence $b(n)$ satisfying the finite linear recurrence given by the same initial conditions, and truncating the recurrence at degree 26: 

\begin{multline*}b(n) = b(n-1) + b(n-2) - b(n-5) + b(n-7) \\ - b(n-12) - b(n-15) + b(n-22) - b(n-26).\end{multline*}

Likewise, the sequence is bounded above by the values $a(n)$ satisfying the same initial conditions and the finite linear recurrence $$a(n) = a(n-1) + a(n-2) - a(n-5) + a(n-7).$$

Both claims require a (multidimensional but straightforward) inductive argument hypothesizing that all differences at sufficient distance are similarly bounded, i.e. for all $n \geq 0$, and for $1 \leq z \leq 7$ in the case of $a$ and $1 \leq z \leq 26$ in the case of $b$, it holds that $$b(n) - b(n-z) \leq c_2(n) - c_2(n-z) \leq a(n) - a(n-z).$$

As a sample of the argument, notice that \begin{align*} a(n) &= a(n-1) + a(n-2) - a(n-5) + a(n-7) \\
c_2(n) &= c_2(n-1) + c_2(n-2) - c_2(n-5) + c_2(n-7) \\ &\phantom{=} - c_2(n-12) - \dots \\
a(n) - a(n-1) &=  a(n-2) - a(n-5) + a(n-7) \\
c_2(n) - c_2(n-1) &= c_2(n-2) - c_2(n-5) + c_2(n-7) - c_2(n-12) - \dots .
\end{align*}

Thus, noting that $c_2$ is a positive, increasing sequence, both claims $a(n) \geq c_2(n)$ and $a(n) - a(n-1) \geq c_2(n) - c_2(n-1)$ hold if it is inductively true that $a(n^\prime) \geq c_2(n^\prime)$ for all $n^\prime < n$ and that $a(n^\prime) - a(n^\prime-3) \geq  c_2(n^\prime) - c_2(n^\prime-3)$ for all $n^\prime < n$.  To establish the latter inductively we observe that 

\begin{align*}
a(n) - a(n-3) &= 2a(n-2) - a(n-5) - a(n-6) + a(n-7) + a(n-8) \\
&= 2(a(n-2) - a(n-5)) + a(n-5) - a(n-6) + (\dots) \\
c_2(n) - c_2(n-3) &= 2(c_2(n-2) - c_2(n-5)) + (c_2(n-5) - c_2(n-6)) + (\dots) .
\end{align*}

Now we note that inductively it holds that $a(n-2) - a(n-5) \geq c_2(n-2) - c_2(n-5)$, that $a(n-5) - a(n-6) \geq c_2(n-5) \geq c_2(n-6)$, and the remaining elided portions are bounded directly by $a(n) \geq c_2(n)$.  Similar arguments work for $b(n)$ though more are required.

We now use a standard theorem in the theory of finite linear recurrences, which is that the solutions can be represented as a linear combination of exponentials in the roots of the characteristic equation.  In this case, we use a numerical algebra package to confirm that the largest root of $$x^7 - x^6 - x^5 + x^2 - 1$$ is approximately $x = 1.54522$, and the largest root of $$x^{26} - x^{25} - x^{24} + x^{21} - x^{19} + x^{14} + x^{11} - x^4 + 1$$ is approximately $x = 1.53623$, and the proof is concluded. \end{proof}

Exploration of the relations between $1/\Psi(-q^2,q)$ and the partition function yields a tangential but interesting connection with Andrews and Merca's study \cite{TruncPent} of the truncated pentagonal number theorem.

We observe the following:

\begin{align*}
\frac{1}{\Psi(-q^2,q)} &= \frac{1}{(q;q)_\infty -2q^7 + 2q^{12} +2q^{15} -2q^{22} - \dots} \\
&= \frac{1}{(q;q)_\infty} \left(\frac{1}{1-\frac{2}{(q;q)_\infty}(q^7 -q^{12} -q^{15} +q^{22} + \dots ) } \right) .
\end{align*}

The function $\frac{q^7 -q^{12} -q^{15} +q^{22}}{(q;q)_\infty}$ is exactly the generating function for $p(n-7) - p(n-12) - p(n-15) + p(n-22)$.  Andrews and Merca in \cite{TruncPent} studied the quantities \phantom{.}

\noindent \begin{tabular}{l}
$p(n) - p(n-1)$ \\
$p(n) - p(n-1) - p(n-2) + p(n-5)$ \\
$p(n) - p(n-1) - p(n-2) + p(n-5) + p(n-7) - p(n-12)$ \\
$p(n) - p(n-1) - p(n-2) + p(n-5) + p(n-7) - p(n-12) - p(n-15) + p(n-22)$ \\
\dots
\end{tabular}

\phantom{.}

\noindent which arise by truncating at an even number of terms the recurrence for the partition numbers given by Euler's pentagonal number theorem, that $p(0) = 1$ and for $n>0$, $$p(n) - p(n-1) - p(n-2) + p(n-5) + p(n-7) - \dots = 0.$$

They showed that these are, with alternating signs, $M_1$, $M_2$, $M_3$, $M_4$, \dots, where $M_k$ is the number of partitions of $n$ in which $k$ is the least integer not appearing as a part (the \emph{mex}), and there are more parts larger than $k$ than there are smaller than $k$.  These have the generating function $${\mathcal{M}}_k = \sum_{n=0}^\infty M_k(n) q^n = \sum_{n=k}^\infty \frac{q^{\binom{k}{2}+(k+1)n}}{(q;q)_n} \left[ {{n-1} \atop {k-1}} \right]_q.$$

But then by taking differences and grouping every four terms we have

$$1/\Psi(-q^2,q) = \frac{1}{(q;q)_\infty} \left( \frac{1}{1 - 2({\mathcal{M}}_2 - {\mathcal{M}}_4 + {\mathcal{M}}_6 - {\mathcal{M}}_8 + \dots)} \right).$$

This gives the following identity relating the false theta function and $(q;q)_\infty$:

\begin{theorem}\label{TPNThm}\begin{align*}\Psi(-q^2,q) &= (q;q)_\infty \left(1 - 2({\mathcal{M}}_2 - {\mathcal{M}}_4 + {\mathcal{M}}_6 - {\mathcal{M}}_8 + \dots) \right) \\
 &= (q;q)_\infty \left(1 - 2 \sum_{j=1}^\infty (-1)^{j+1} \sum_{n=2j}^\infty \frac{q^{\binom{2j}{2}+(2j+1)n}}{(q;q)_n} \left[ {{n-1} \atop {2j-1}} \right]_q \right).
\end{align*}
\end{theorem}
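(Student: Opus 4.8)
The plan is to prove the first displayed line, from which the second follows by inserting the stated $q$-series for $\mathcal{M}_k$; I would treat the statement as a direct identity for $\Psi(-q^2,q)$ rather than routing through the reciprocal, which is needed only for the motivating discussion. I would reduce the theorem to two ingredients: a sign-comparison identity
$$\Psi(-q^2,q) = (q;q)_\infty - 2S, \qquad S := q^7 - q^{12} - q^{15} + q^{22} + q^{57} - q^{70} - q^{77} + q^{92} + \dots ,$$
and the evaluation $S = (q;q)_\infty\left(\mathcal{M}_2 - \mathcal{M}_4 + \mathcal{M}_6 - \dots\right)$. Substituting the second into the first gives $\Psi(-q^2,q) = (q;q)_\infty\bigl(1 - 2(\mathcal{M}_2 - \mathcal{M}_4 + \dots)\bigr)$, which is the claim.

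For the first ingredient I would write both series over the generalized pentagonal numbers $g_m = m(3m-1)/2$, $m \in \mathbb{Z}$. By the pentagonal number theorem the coefficient of $q^{g_m}$ in $(q;q)_\infty$ is $(-1)^m$. For $\Psi(-q^2,q)$ I would expand the defining difference $\sum_{n\ge 0} - \sum_{n\le -1}$ of $(-q^2)^{\binom{n+1}{2}}q^{\binom{n}{2}}$, note that the exponent $2\binom{n+1}{2}+\binom{n}{2} = n(3n+1)/2$ again runs over the $g_m$, and read off the sign from the parity of $\binom{n+1}{2}$ for $n \ge 0$ and of $\binom{|n|}{2}$ for $n < 0$ (the latter entering with the leading minus sign) — the same period-$4$ parity fact used in the proof of Lemma \ref{evenoddfalsetheta}. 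Differencing the two sign sequences shows the coefficients agree except on a period-$8$ family of pentagonal numbers, where they differ by $\pm 2$; collecting these reproduces exactly $-2S$.

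For the second ingredient I would set $P_k := \sum_{m=-(k-1)}^{k} (-1)^m q^{g_m}$, the truncation of the pentagonal series after $2k$ terms, so that the Andrews--Merca truncated pentagonal number theorem reads $P_k/(q;q)_\infty = 1 + (-1)^{k-1}\mathcal{M}_k$, equivalently $\bigl((q;q)_\infty - P_k\bigr)/(q;q)_\infty = (-1)^k \mathcal{M}_k$. Taking the alternating sum over even indices $k=2i$ yields
$$\mathcal{M}_2 - \mathcal{M}_4 + \mathcal{M}_6 - \dots = \frac{1}{(q;q)_\infty}\sum_{i\ge 1}(-1)^{i+1}\bigl((q;q)_\infty - P_{2i}\bigr),$$
where the right-hand sum is a well-defined formal power series because $(q;q)_\infty - P_{2i}$ begins at order $g_{-2i} = 6i^2+i \to \infty$. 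It then remains to verify $\sum_{i\ge 1}(-1)^{i+1}\bigl((q;q)_\infty - P_{2i}\bigr) = S$: the coefficient of $q^{g_m}$ on the left is $(-1)^m$ times $\sum (-1)^{i+1}$ over those $i\ge 1$ for which $g_m$ lies in the tail of $P_{2i}$, i.e. $i \le \lfloor (m-1)/2\rfloor$ when $m>0$ and $i \le \lfloor |m|/2\rfloor$ when $m<0$; this partial alternating sum equals $1$ or $0$ according as the bound is odd or even, which matches the period-$8$ support and signs of $S$ term by term.

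I expect the main obstacle to be precisely this coefficient bookkeeping: aligning the period-$8$ sign pattern of $S$ (extracted from the period-$4$ parities of $\binom{n+1}{2}$ and $\binom{|n|}{2}$ in the two theta-type expansions) against the telescoping alternating sum of pentagonal tails, while keeping the formal interpretation of the infinite alternating sum honest. Once $S = (q;q)_\infty\sum_{i\ge1}(-1)^{i+1}\mathcal{M}_{2i}$ is established, assembling the two ingredients and substituting the explicit generating function for $\mathcal{M}_k$ to obtain the second displayed line is routine.
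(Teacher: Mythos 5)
Your proposal is correct and takes essentially the same route as the paper: the paper likewise decomposes $\Psi(-q^2,q) = (q;q)_\infty - 2(q^7 - q^{12} - q^{15} + q^{22} + \cdots)$ and invokes the Andrews--Merca truncated pentagonal number theorem, grouping the sign-discrepancy series into four-term blocks (differences of consecutive even truncations) to identify it with $(q;q)_\infty(\mathcal{M}_2 - \mathcal{M}_4 + \mathcal{M}_6 - \cdots)$. Your alternating-sum-of-tails bookkeeping with the $P_{2i}$ is just a reorganization of that same telescoping step, worked out in more explicit detail than the paper's brief ``taking differences and grouping every four terms.''
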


We remark that since $\frac{\Psi(-q^2,-q)-1}{(q;q)_\infty}$ is the generating function for partitions with rank 0, the theorem above displays quite clearly the combinatorial fact that the number of partitions with nonzero rank is even, which is normally seen as a consequence of conjugation negating the rank.


\section{Further Questions}

We conclude with some potential directions of further investigation.




The congruences proved in Section \ref{seccon} are exemplars of what appear to be a larger class,  infinite but with several restrictions.  For $c_t(An+B)$, we have the following behaviors suggested by empirical calculation.

\begin{conjecture} In addition to the infinite classes of congruences modulo 2 proven for $c_5$ and $c_9$ above, the following congruences appear to hold modulo 2:

\phantom{.}

\begin{tabular}{cllll}
$c_9$ : & $36n+14$, & $196n+(54,166,194)$, & $\dots$ &  \\
$c_{13}$ : & $32n+23$, & $64n+63$, & $72n+(15,21,39,69)$, & $\dots$ \\
$c_{17}$ : & $128n+80$, & $\dots$ & & 
\end{tabular}

\end{conjecture}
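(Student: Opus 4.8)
The plan is to treat $c_9$, $c_{13}$, and $c_{17}$ with the same machinery as the first and third theorems: reduce the reciprocal to an eta-quotient modulo $2$, dissect it $2$-adically with Lemma \ref{evenoddfalsetheta}, and then read off the vanishing progressions, either directly for the pure power-of-$2$ moduli or through a completed-square quadratic-residue analysis for the moduli divisible by an odd prime. Concretely, for each $t \in \{9,13,17\}$ I would first use $f(q) \equiv_2 -f(q)$ and the Jacobi Triple Product to write $\Psi(-q^t,q) \equiv_2 (q;q^{t+1})_\infty (q^t;q^{t+1})_\infty (q^{t+1};q^{t+1})_\infty$ and collapse the right-hand side to a quotient of the $f_k$ modulo $2$, so that $1/\Psi(-q^t,q)$ is congruent mod $2$ to an explicit eta-quotient. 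This is the step that produced $f_3^3/f_1$ for $t=5$ and its analogue for $t=9$.

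Next I would apply Lemma \ref{evenoddfalsetheta} (and the theta-function dissection it generalizes) repeatedly, splitting one factor of $2$ out of the modulus at each pass exactly as in the proof of the second theorem. The target moduli fix the depth: $128 = 2^7$ for $c_{17}$, and $64 = 2^6$, $32 = 2^5$ for $c_{13}$, call for up to six and five layers of binary dissection, whereas the $c_9$ congruences $36n+14$ and $196n+(54,166,194)$ all sit inside the single class $4m+2$ (one checks $m \equiv 3 \bmod 9$ and $m \equiv 13,41,48 \bmod 49$ respectively) and need only two. The mechanical content of each pass is that cross terms either cancel or carry an even coefficient; tracking these, as the proof of $c_5(32n+31)\equiv 0 \pmod 4$ shows, is where the bookkeeping accumulates.

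Once the correct $2$-adic subprogression is isolated, I expect the residual generating function to reduce modulo $2$ to a \emph{single} (possibly false) theta function $\Psi(\pm q^a,\pm q^b)$, whose nonzero coefficients occur only at exponents of the quadratic $Q(k)=a\binom{k+1}{2}+b\binom{k}{2}$. For the mixed-modulus congruences I would then imitate the first theorem: complete the square to write $Q(k)$ as a unit times $(k-c)^2$ plus a constant modulo the odd prime $p$ dividing the modulus ($p=3$ for $36n+14$ and $72n+(15,21,39,69)$, and $p=7$ for $196n+(54,166,194)$), and conclude that $c_t$ vanishes on exactly those residues for which the resulting value is a quadratic non-residue mod $p$; the listed residues should be precisely those non-residues. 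For the purely $2$-adic congruences $32n+23$, $64n+63$, and $128n+80$ the conclusion is more direct still: after enough dissection $Q(k)$ simply misses the relevant class modulo a power of $2$, just as $3n^2-2n$ misses $2\bmod 4$ in the first theorem, so that progression of $c_t$ is empty mod $2$.

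The main obstacle is the first step for $t=13,17$, together with the deeper $c_9$ behavior: one needs, in each case, the precise mod-$2$ eta-quotient identity that forces the reciprocal to collapse to a single theta function after dissection — the analogues of the Xia--Yao identity $f_3^3/f_1 \equiv_2 f_1^8 + q f_3^{12}/f_1^4$ used for $c_5$ and of $f_1 f_5 \equiv_2 f_1^6 + q f_5^6$ used for $c_9$. These are not automatic; locating or deriving the required relations among $f_1,f_2,f_7,f_{13},f_{14},f_{17},\dots$ — in particular those responsible for the prime-$7$ structure behind $196n+(54,166,194)$ — is the crux, and the number of dissection layers demanded by $c_{17}$ makes the accompanying cross-term verification correspondingly heavy.
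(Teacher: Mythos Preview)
The paper does not prove this statement: it is explicitly labeled a \emph{Conjecture}, placed in the ``Further Questions'' section, and introduced as ``behaviors suggested by empirical calculation.'' There is therefore no proof in the paper to compare your proposal against.

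Your outline is the natural extrapolation of the machinery the paper uses for the \emph{proved} cases (Theorems~1 and~3): pass to an eta-quotient modulo~$2$ via the Jacobi Triple Product, dissect $2$-adically with Lemma~\ref{evenoddfalsetheta}, collapse to a single (false) theta series, and finish with a quadratic-residue argument. As a strategy this is entirely reasonable, and indeed it is presumably what the author had in mind. But you yourself correctly identify the genuine gap: the whole scheme hinges on finding, for each $t\in\{9,13,17\}$ and each target progression, a mod-$2$ identity among the $f_k$ that forces the dissected reciprocal to reduce to a \emph{single} theta series --- the analogue of $f_3^3/f_1 \equiv_2 f_1^8 + q f_3^{12}/f_1^4$ for $c_5$ and of $f_1 f_5 \equiv_2 f_1^6 + q f_5^6$ for $c_9$. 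You have not supplied these identities, nor is there any guarantee they exist in the required form; their absence is exactly why the paper records these as conjectures rather than theorems. What you have written is a plausible plan of attack with its crux explicitly left open, not a proof.
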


\begin{conjecture} The following congruences appear to hold for $c_5$: 

\phantom{.}

\begin{tabular}{clll}
$0 \pmod{8}$ : & $32n+31$, & $128n+123$, & $512n+491$ \\
$0 \pmod{4}$ : & $64n+19$, & $256n+75$, & $196n+7j+5$
\end{tabular}

\noindent for $j \in \{ 2, 6, 10, 14, 15, 19, 22, 26, 27 \}$.

\end{conjecture}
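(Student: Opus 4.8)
The plan is to derive every clause of the conjecture by pushing the dissection-and-rationalization machinery behind the congruence $c_5(32n+31)\equiv 0 \pmod 4$ to higher dyadic precision, and, for the last family, by crossing that machinery with a $7$-dissection. All of the moduli are dyadic ($32n+31$, $128n+123$, $512n+491$, $64n+19$, $256n+75$) except $196 = 2^2\cdot 7^2$, so I expect exactly two sources of cancellation: the parity bookkeeping of the false-theta dissection and, for $196$, one additional $7$-adic ingredient.

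For the purely dyadic clauses I would begin as before, from $\Psi(-q^5,q) = A(q^8) - q B(q^4)$ and the further dissections of $B$, $F$, $G$ furnished by Lemma \ref{evenoddfalsetheta}. Rationalizing $1/\Psi(-q^5,q)$ replaces the denominator by $A(q^8)^{2^k} - q^{2^k} B(q^4)^{2^k}$, which after the Frobenius reductions $(\sum a(n)q^n)^{2^k} \equiv_{2^k} (\sum a(n) q^{2n})^{2^{k-1}}$ becomes a series in a large power of $q$; this factor is invertible modulo the target and contributes nothing to the residue class being extracted, so the entire arithmetic lives in the finite product forming the numerator. To reach the deeper classes $128n+123$, $512n+491$, $64n+19$, and $256n+75$ one iterates the dissection one or two levels beyond what was needed for $32n+31$, raising each time the power of $q$ that isolates the odd part, and then extracts the stated residue from the numerator.

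The essential obstacle is that the mod-$4$ proof rested on $A(q)^2 \equiv_4 F(q)^2$, justified because the only discrepancies are cross terms carrying a factor $2$ and $2 \equiv -2 \pmod 4$. This fails at the modulus $8$, where $2 \not\equiv -2$, so the sign ambiguities on single cross terms no longer evaporate. I would therefore record each squared dissection to one further order, in the shape $A(q)^2 \equiv_8 H(q^2) + 2q I(q^2) + 4 q^{m} J(q^2)$ for some $H,I,J$, and exploit the weaker relation $4 \equiv -4 \pmod 8$ to annihilate the doubly-crossed terms. The mod-$8$ computation is thus one full layer more intricate than the already tedious mod-$4$ one: after extracting $q^{32n+31}$ (and its deeper analogues) one must check that every surviving monomial either pairs with a sibling differing only by a factor-of-$2$ sign or collapses through a false-theta identity such as $F(q^{128}) + q^{16} G(q^{64}) = B(q^{16})$. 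Verifying that this process closes, rather than spawning an ever-growing list of terms, is where I expect the real difficulty and the real risk to lie.

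For the mixed family $c_5(196n+7j+5) \equiv 0 \pmod 4$, where $196 = 2^2\cdot 7^2$, I would carry the same rationalization to isolate a residue modulo $4$ against a denominator that is a series in a high power of $q$, and then apply a $7$-dissection, or more likely a $49$-dissection, to the numerator. Every listed residue satisfies $7j+5 \equiv 5 \pmod 7$, so all targeted indices already lie in a single class modulo $7$; the precise admissible set $j \in \{2,6,10,14,15,19,22,26,27\}$ should then emerge as exactly those $49$-adic sub-classes, paired with their surviving residues modulo $4$, on which the dyadic cancellation closes. I expect the underlying selection to be a mod-$49$ analogue of the completion-of-the-square computation $3^{-1}(j'+3^{-1}) \equiv (n-3^{-1})^2$ used for the mod-$2$ congruences, now interacting with the factor-of-$2$ cancellation rather than standing alone. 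Making these two independent mechanisms cooperate at the level of the modulus $4$, instead of merely modulo $2$, is the delicate point; I would attempt it by splitting the numerator along residues modulo $49$ first, so that the $7$-adic and dyadic progressions can be tracked separately before recombination.
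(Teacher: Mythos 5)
The statement you are trying to prove is not proved in the paper at all: it appears in the ``Further Questions'' section as a conjecture, supported only by empirical calculation. So there is no proof of the paper's to compare against, and the question is whether your proposal itself constitutes a proof. It does not. What you have written is a research plan that names the two mechanisms one would hope to use (deeper dyadic dissection, plus a $7$-adic ingredient for the modulus $196$), but at every point where the actual mathematical content would have to appear, the proposal defers: you say the mod-$8$ computation is ``one full layer more intricate,'' that one ``must check that every surviving monomial'' cancels, and that whether ``this process closes, rather than spawning an ever-growing list of terms, is where I expect the real difficulty and the real risk to lie.'' That admitted risk is precisely the theorem. The paper's own mod-$4$ argument for $c_5(32n+31)$ works only because $A(q)^2 \equiv_4 F(q)^2$ and $B(q)^2 \equiv_4 G(q)^2$ kill all sign discrepancies at the level of cross terms; you correctly observe that this collapses modulo $8$, but your proposed repair --- tracking one more term $4q^m J(q^2)$ and using $4 \equiv -4 \pmod 8$ --- still leaves the coefficient-$2$ cross terms, whose signs now matter, completely unmanaged. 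No identity is exhibited that cancels them, and there is no a priori reason the finitely many false-theta identities available (of the type $F(q^{128}) + q^{16}G(q^{64}) = B(q^{16})$) suffice.

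The $196n+7j+5$ family has a second, independent gap: the paper's Lemma on false theta dissection is strictly a $2$-dissection (its proof splits indices by residue mod $4$ and tracks parities of $\binom{n+1}{2}$), and no $7$- or $49$-dissection of $\Psi$ is available in the paper or constructed by you. The ``completion of the square'' mechanism you invoke is the one used for the mod-$2$ result $c_5(2n+1)$, but it applies there because the relevant series was shown congruent to a single lacunary series $\sum q^{n(3n-2)}$, whose exponents can be analyzed by quadratic residues; to run the analogue mod $49$ you would first need a comparable closed form for $\sum c_5(4n+3)q^n$ or similar modulo $4$, which is exactly what is missing. In short, your proposal identifies plausible obstacles and tools but proves none of the six claimed congruence families; as it stands it is a roadmap for an attack on an open conjecture, not a proof of it.
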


One may notice that the only candidate congruences observed in experimentation are modulo powers of 2, and arise for $c_t$ with $t \equiv 1 \pmod{4}$.  While not a complete explanation, observe that, as we are dealing with $\Psi(-q^t,q)$, the dissection lemma conveniently yields not only an even-odd dissection but in fact a dissection modulo 4, which would seem to assist in permitting congruences to arise quickly.

While congruences with other moduli are not ruled out by experimentation, it would certainly be interesting if these turned out to be the entire class of allowable congruences for functions of the type $1/\Psi(-q^a,q)$.

The long list of cancellations necessary to prove the congruence theorems modulo 4 seem like a prime candidate for potential automation: only a small number of identities should be necessary and many of the cancellations are repetitive.  The expansion steps also seem to potentially be algorithmically recognizable.

Identities such as Ramanujan's (\ref{RLNPsi}) for $\Psi(q^3,q)$ and Theorem \ref{TPNThm} for $\Psi(-q^2,q)$ would certainly be of interest for reciprocals of false theta functions.

The asymptotics of reciprocals of false theta functions in general could be of interest.  The methods used here were crude but easily accessible; perhaps the exact asymptotic could be determined, and likewise others.  For instance, the series $1/\Psi(-q^3,q)$ seems to yield a sequence of coefficients with exponential ratio approximately 1.37.  We would certainly offer the general conjecture that follows.

\begin{conjecture} The coefficients of the series $1/\Psi(-q^a,q)$ grow exponentially.
\end{conjecture}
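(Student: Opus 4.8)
The plan is to recast exponential growth of the $c_a(n)$ as the existence of a zero of $\Psi(-q^a,q)$ strictly inside the unit disk. Since the exponents in $\Psi(-q^a,q) = 1 + \sum_{k\ge 1}\epsilon_k q^{e_k}$ grow quadratically, the series is holomorphic on $|q|<1$, and the radius of convergence $\rho_a$ of $1/\Psi(-q^a,q) = \sum_n c_a(n)q^n$ equals the modulus of the zero of $\Psi(-q^a,q)$ nearest the origin. Exponential growth is then equivalent to $\rho_a<1$, with base $1/\rho_a$. First I would record the low-order expansion $\Psi(-q^a,q) = 1 - q - q^a + q^{a+3} - \cdots$ for $a\ge 2$, isolating the two leading negative coefficients at exponents $1$ and $a$, together with the eventual periodicity of the sign sequence $(\epsilon_k)$, which follows as in the $a=2$ discussion from the residue of $\binom{n+1}{2}$ modulo $4$. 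I would also establish, from the recurrence $c_a(n) = -\sum_{k\ge 1}\epsilon_k c_a(n-e_k)$, that the $c_a(n)$ are positive and increasing; by Pringsheim's theorem the coefficients being nonnegative forces $\rho_a$ itself to be a real positive zero of $\Psi(-q^a,q)$, so it suffices to produce a zero in $(0,1)$.

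Two routes present themselves. The elementary one mirrors the proof of the asymptotic theorem for $c_2$: truncate the infinite recurrence at the end of one full sign-period, choosing the cut so the first discarded term is a subtraction, to obtain a finite linear recurrence for a minorant $b(n)\le c_a(n)$; the comparison is justified by a multidimensional induction on the differences $b(n)-b(n-z)\le c_a(n)-c_a(n-z)$, exactly as carried out for $a=2$. One then checks that the dominant characteristic root of the $b$-recurrence exceeds $1$; the crude two-term minorant with characteristic polynomial $x^a - x^{a-1}-1$, which has a real root exceeding $1$ for every $a\ge 1$, indicates why this should hold. The analytic route instead exhibits the zero directly: since $\Psi(-q^a,0)=1$, it is enough to find one $q_\ast\in(0,1)$ with $\Psi(-q^a,q_\ast)<0$ and apply the intermediate value theorem. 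For bounded $a$ this is straightforward, because one can place $q_\ast$ where $1-q-q^a$ is appreciably negative while the tail beyond exponent $a+3$ is geometrically small; for $a=2$ this already locates a zero near $q\approx 0.647$, matching the measured base $1.545$.

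The main obstacle is uniform control as $a\to\infty$. In that regime $1-q-q^a$ is negative only for $q$ extremely close to $1$, precisely where the oscillating tail is no longer geometrically small, so neither the crude sign estimate nor the simplest minorant (whose root tends to $1^{+}$) settles matters cleanly. I expect that overcoming this requires the asymptotics of $\Psi(-q^a,q)$ as $q\to 1^{-}$ (and toward other roots of unity), where the quantum-modular and Euler--Maclaurin expansions alluded to in the introduction give access to the radial limit. Showing that this limit is negative, or more weakly that $\Psi(-q^a,q)$ changes sign on $(0,1)$ for every $a$, is the crux on which the conjecture ultimately rests; a quantitative version would moreover pin down the growth base $1/\rho_a$ and thereby refine the final conjecture into an exact asymptotic.
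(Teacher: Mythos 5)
First, a point of order: the statement you were asked to prove is one of the paper's \emph{conjectures}. The paper proves exponential growth only for $a=2$ (with explicit bounds $1.53623<b<1.54522$) and reports numerics for $a=3$; it offers no proof for general $a$. So your proposal must be judged on its own merits, and on those merits it is a sound framework containing one directional error, one wrong expectation, and an acknowledged unfilled crux.

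The framework is fine: for every $a\geq 2$ the recurrence for $c_a(n)$ has sorted lags $1,\,a,\,a+3,\,3a+1,\,3a+6,\,6a+3,\,6a+10,\,10a+6,\,10a+15,\,15a+10,\dots$ with the same period-8 sign pattern $+,+,-,+,-,-,+,-$ as in the paper's $a=2$ case, so positivity and monotonicity generalize, Pringsheim applies, and the conjecture is equivalent to $\Psi(-q^a,\cdot)$ having a zero in $(0,1)$. But your elementary route is backwards. Truncating the recurrence after $+c_a(n-a)$ discards the tail $-c_a(n-a-3)+c_a(n-3a-1)-c_a(n-3a-6)-c_a(n-6a-3)+c_a(n-6a-10)-\cdots$, and this tail is $\leq 0$: every added term in it is dominated by a distinct earlier subtracted term ($3a+1>a+3$, $6a+10>3a+6$, $10a+15>6a+3$, $15a+10>10a+6$, and so on periodically). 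Hence the two-term recurrence with characteristic polynomial $x^a-x^{a-1}-1$ bounds $c_a(n)$ from \emph{above}, not below, and its root exceeding $1$ proves nothing. The same confusion is in your prescription to cut ``so the first discarded term is a subtraction'': a minorant needs the discarded tail to be nonnegative, which forces the first discarded term to be an \emph{addition} — exactly as in the paper, where the lower bound $b$ keeps all eight lags through $-c_2(n-26)$ and discards starting from $+c_2(n-35)$. The correct minorant exists for every $a$ (keep lags through $10a+6$), but then one must show its degree-$(10a+6)$ characteristic polynomial $P_a$ has a root exceeding $1$ for all $a$; this is not routine, since $P_a(1)=1>0$ and $P_a'(1)=0$, so there is no sign change at $x=1$ to exploit, and the paper's numerical root-finding cannot cover infinitely many $a$.

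The analytic route is the one that can be completed, but not in the form you state: the radial limit you hope is negative is in fact exactly \emph{zero} for every $a$. Writing $\Psi(-q^a,q)=q^{-(a-1)^2/(8(a+1))}\sum_{n\geq 1}C(n)\,q^{n^2/(8(a+1))}$, where $C$ is supported on $n\equiv\pm(a-1)\pmod{2(a+1)}$, has period $M=8(a+1)$ and mean zero, the standard evaluation $L(0,C)=\sum_{n=1}^{M}C(n)\left(\tfrac12-\tfrac{n}{M}\right)=-\tfrac1M\sum_{n=1}^M nC(n)$ vanishes identically (both the $+(a-1)$ and $-(a-1)$ families cancel separately, because $0-1-2+3=1-2-3+4=0$). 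What saves the approach is the next order of the Euler--Maclaurin/Lawrence--Zagier expansion $\sum_{n\geq1}C(n)e^{-n^2u}\sim L(0,C)-L(-2,C)\,u+O(u^2)$: using $\zeta(-2,x)=-\tfrac13 B_3(x)$ and the reflection $B_3(1-x)=-B_3(x)$, a short computation gives $L(-2,C)=8(a+1)>0$, whence $\Psi(-q^a,q)\sim \ln q$ as $q\to1^-$ for every $a\geq2$ (this matches numerics: $\Psi(-q^2,q)\approx-0.0106$ at $q=0.99$). The intermediate value theorem then yields the zero in $(0,1)$. Note finally that your ``main obstacle'' — uniformity as $a\to\infty$ — is a red herring for the conjecture as stated, which quantifies over each fixed $a$ separately: an $a$-dependent neighborhood of $q=1$ on which $\Psi<0$ suffices. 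The genuinely missing ingredient in your write-up is thus not uniform control but the second-order radial asymptotics, which you gesture at but do not carry out.
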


Does this hold for other classes of false theta function reciprocals?  What about other, related combinatorial objects?

For an extreme, consider $$\frac{1}{1-q-q^2-q^5-q^7-q^{12}-q^{15}-q^{22}-q^{26}-\dots}.$$

This is the generating function for compositions into pentagonal numbers.  The partition numbers are a count of the same set, with a sign weighting, as are the coefficients of $1/\Psi(-q^2,q)$, but only the partition number have the precise delicate balance that yields a subexponential growth rate.  (Indeed, what is the growth rate for this function?  It must exceed the golden ratio which yields the Fibonacci numbers, but be less than 2, the exponential base for unrestricted compositions.)

All sign variations of the triple product $$(q^a;q^{a+b})_\infty (q^b;q^{a+b})_\infty (q^{a+b};q^{a+b})_\infty ,$$ including the related false theta functions, form an equivalence class.  Among these, the original signs give the ordinary partitions into parts congruent to $a$, $b$, or 0 modulo $a+b$, and this yields a sequence of subexponential growth.  All other choices, experimentally, seem to have either exponential growth, or eventually oscillatory signs.  Are the original triple product functions really the only special case among this class, and if so, why?

Originally, it was hoped that that $M_{4k-2}(n) \geq M_{4k}(n)$ for all $k, n \geq 0$, which would have shown that the reciprocal $1/\Psi(-q^2,q)$ is the generating function for partitions, times something with strictly nonnegative coefficients.  The former claim turns out not to be true, though the final ratio $(q;q)_\infty/\Psi(-q^2,q)$ does appear to have strictly positive coefficients.  Can this be verified, and given a more intuitive combinatorial explanation?

\section{Declarations}

No funding was received to assist with the preparation of this manuscript.  The author declares that he has no competing financial interests.

\section{Acknowledgements}

The material in this paper was presented at Penn State at the Legacy of Ramanujan 2024 conference celebrating the 85th birthdays of George Andrews and Bruce Berndt.  The author cordially thanks the organizers for the invitation, and Profs. Andrews and Berndt for their masterful contributions to the field.

\end{document}